\newcommand{\GEM}{\mathsf{GEM}}
\newcommand{\dbeta}{\mathsf{beta}}
\newcommand{\BN}{\mathbb{N}}
\newcommand{\BP}{\mathbb{P}}
\newcommand{\BE}{\mathbb{E}}
\newcommand{\Var}{\operatorname{Var}}
\newcommand{\re}{\mathrm{e}}
\newcommand{\ii}{\mathrm{i}}
\newcommand{\deq}{\overset{d}{{}={}}}
\newcommand{\simas}{\mathrel{\sim_{\mathrm{a.s.}}}}
\renewcommand{\Re}{\operatorname{Re}}
\newtheorem{lemma}{Lemma}
\newtheorem{thm}{Theorem}
\newtheorem{corollary}{Corollary}
\theoremstyle{definition}
\newtheorem*{remark*}{Remark}
\begin{document}

\title{Successive maxima of samples from a GEM distribution}
\author{Jim Pitman\thanks{Statistics Department, 367 Evans Hall \# 3860, University of California, Berkeley, CA 94720-3860, U.S.A.}
\ and
Yuri Yakubovich\thanks{Saint Petersburg State University, St.\;Petersburg State University, 7/9 Universitetskaya nab., St.\;Petersburg, 199034 Russia}}
\maketitle
\begin{abstract}
We show that the maximal value in a size $n$ sample from GEM$(\theta)$ distribution is distributed as a sum of independent geometric random variables. 
This implies that the maximal value grows as $\theta\log(n)$ as $n\to\infty$. For the two-parametric GEM$(\alpha,\theta)$ distribution we show that 
the maximal value grows as a random factor of $n^{\alpha/(1-\alpha)}$ and find the limiting distribution.
\end{abstract}



\section{Introduction}

Consider a sequence of independent and identically distributed (i.i.d.)\ random variables
$X_1,X_2,\dots$. The asymptotic behaviour of a maximum of its finite sample
\begin{equation}\label{eq:defmax}
M_n=\max\{X_1,\dots,X_n\}
\end{equation}
is quite well understood. For continuous distribution the most natural question is the limiting behaviour of 
the sample maximum. The answer is well known: after proper rescaling the distribution of the maximum weakly converges
to a non-degenerate limit which must have the distribution function $F_\alpha(x)=\exp(-(1+x\alpha)^{-1/\alpha})$, for 
some $\alpha\in(-\infty,\infty)$
and $x$ such that $1+x\alpha>0$ (and $F_\alpha(x)$ equals 0 or 1 for other $x$), see, e.g.,~\cite{MR1791071}. Here $\alpha$ (and the scaling) 
depends on the behaviour of the distribution near the supremum of its support.  Of course, limits can be also
degenerate and there exist distributions for which no non-degenerate limit is possible.

The situation is quite different for exchangeable samples, that is the infinite sequence of random variables with a distribution invariant 
under arbitrary finite permutation of indices.  In this case any distribution can appear as a limiting one for the maximum, as shown by the following
example, which seems to be folklore. Let $Z_1,Z_2,\dots$ be a sequence of  i.i.d.\  random variables and $M$ be an independent random variable
with some given distribution. Taking $X_n=Z_n+M$ gives an exchangeable sequence $X_1,X_2,\dots$, and if the distribution of
$Z_1$ has the support bounded above then $M_n=\max\{X_1,\dots,X_n\}$ converges a.s.\ to a shift of $M$ without any rescaling. So the question
about the maximum of an exchangeable sample is not very interesting in general. However in this note we present an example 
when a non-trivial exact distribution of the maximum can be found for a family of exchangeable samples. This family is 
the so-called $\GEM$ distribution described below in Section~\ref{sec:GEM}.  In Section~\ref{sec:maxGEM(theta)}
we give two proofs, analytic and probabilistic, of the following fact, which is further generalized in Section~\ref{sec:Ntheta}. 
Here and below the notation $X\deq Y$ means that random variables
$X$ and $Y$ have the same distribution.

\begin{thm}\label{thm:Mn}
Let $X_1,X_2,\dots$ be an exchangeable sequence obtained by inpependent random sampling from a $\GEM(\theta)$ distribution on the positive integers.  
Then the maximum $M_n$ of $X_1, \ldots X_n$ satisfies
\begin{equation}\label{eq:Mnassum}
M_n-1\deq G_1+\dots+G_n\,,
\end{equation}
where $G_1,\dots,G_n$ are independent geometric random variables with the distributions
\begin{equation}\label{eq:Ggeom}
\BP[G_i=k]=\tau_i(1-\tau_i)^{k} \mbox{ where } \tau_i=\frac{i}{\theta+i} \mbox{ and } k=0,1,2,\dots.
\end{equation}
\end{thm}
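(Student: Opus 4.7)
My plan is to realize $M_n$ through the stick-breaking construction of $\GEM(\theta)$ together with uniform sampling, then pass to the exponential/Poisson side so that the R\'enyi representation of the maximum of i.i.d.\ exponentials produces the geometric summands.

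First, write $P_k=\beta_k\prod_{j<k}(1-\beta_j)$ with $\beta_j$ i.i.d.\ $\dbeta(1,\theta)$, and generate each $X_i$ from an independent uniform by setting $W_i:=1-U_i$, so that $X_i=\min\{k\ge1: V_k<W_i\}$ where $V_k:=\prod_{j\le k}(1-\beta_j)$. Because $V_k$ is strictly decreasing in $k$, the largest $X_i$ is the one indexed by the smallest $W_i$; writing $W^*:=\min_{i\le n}W_i$ gives
$$
M_n=\min\{k\ge1: V_k<W^*\}.
$$
Now take logarithms: $E_j:=-\log(1-\beta_j)$ are i.i.d.\ $\operatorname{Exp}(\theta)$, so their partial sums $T_k=E_1+\dots+E_k$ are the arrival times of a rate-$\theta$ Poisson process $N_\theta$ (independent of the $W_i$), and with $F^*:=-\log W^*$ the display becomes $M_n-1=N_\theta(F^*)$.

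It remains to understand $F^*$. The variables $-\log W_i$ are i.i.d.\ $\operatorname{Exp}(1)$ and $F^*$ is their maximum, so R\'enyi's representation of exponential order statistics gives $F^*\deq \tilde H_1+\tilde H_2/2+\dots+\tilde H_n/n$ with $\tilde H_i$ i.i.d.\ $\operatorname{Exp}(1)$. By additivity of the Poisson process, $N_\theta(F^*)\deq N_1+\dots+N_n$ where $N_i\mid\tilde H_i\sim\operatorname{Poisson}(\theta\tilde H_i/i)$ are independent. A one-line integration shows that a $\operatorname{Poisson}(\Lambda)$ with $\Lambda\sim\operatorname{Exp}(\mu)$ is geometric on $\{0,1,2,\dots\}$ with parameter $\mu/(1+\mu)$; applied with $\mu=i/\theta$, we obtain $N_i\deq G_i$ for $\tau_i=i/(\theta+i)$, and summing yields $M_n-1\deq G_1+\dots+G_n$.

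The main obstacle is the initial encoding: one must recognize that $M_n$ can be read off as the hitting time of a rate-$\theta$ Poisson process past a threshold which is independent of it and is itself the maximum of $n$ i.i.d.\ $\operatorname{Exp}(1)$ variables. Once that translation is in place the rest is classical (R\'enyi, Poisson splitting, exponential-Poisson mixing). The only bookkeeping points are that the $\min$ on the uniform scale becomes a $\max$ on the exponential scale (which is exactly what calls up R\'enyi), and that ``the first arrival after $F^*$'' versus ``the number of arrivals in $[0,F^*]$'' accounts for the offset in $M_n-1$.
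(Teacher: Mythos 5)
Your argument is correct and is essentially the paper's own probabilistic proof (Lemma~\ref{lem:Ggeom}): both pass to the exponential scale where the $\GEM(\theta)$ cut points become the arrivals of a homogeneous Poisson process independent of the sampling threshold, identify that threshold as the maximum of $n$ i.i.d.\ exponentials, apply the R\'enyi spacings representation, and use the fact that a Poisson process stopped at an independent exponential time is geometric. The only cosmetic difference is that you derive the Poisson structure directly from the stick-breaking products by taking logarithms, whereas the paper cites the known fact that $N_\theta$ is an inhomogeneous Poisson process with intensity $\theta\,du/(1-u)$ and then performs the same time change.
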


An easy consequence is that unlike the independent case the rescaled maximum of a $\GEM(\theta)$ sample has a normal 
limit. 

For the two parameter $\GEM(\alpha,\theta)$ distribution the representation \eqref{eq:Mnassum} as a sum of independent random variables is
no longer valid. Nevertheless we show in Section~\ref{sec:max2} that in this case the maximum of a size $n$ sample behaves as a random multiple of 
$n^{\alpha/(1-\alpha)}$ as $n\to\infty$.

For discrete distributions ties can occur and the question arises how many values
equal to the maximum can occur in the sample.  The answer to this question for independent random variables is also known: Brands et~al.~\cite{MR1294106}
conjectured and Baryshnikov et al.~\cite{MR1340152} soon confirmed that the number of maxima in a sample of $n$ independent discrete
random variables can exhibit just three types of behaviour as $n\to\infty$: either it converges to 1 or 
to $\infty$  in probability, or it does not have a limit. These three cases can be distinguished in terms of the so-called 
\textit{discrete hazard rates} of the distribution of $X_1$, defined as
\begin{equation}\label{eq:hazard}
h_j=\frac{\BP[X_1=j]}{\BP[X_1\ge j]},\qquad j=1,2,\dots.
\end{equation}
(Here we suppose without loss of generality that $X_1$ assumes values $1,2,\dots$.) If $h_j\to0$ as $j\to\infty$, then
the number of maxima converges in probability to 1, and this is the only possibility for convergence to a proper distribution. 
This result was extended to an almost sure (a.s.)\ convergence by Qi~\cite{MR1458007}, who showed that a.s convergence holds if and only if
the series $\sum_j h_j^2$ converges. Later, a more probabilistic proof of this result was given by Eisenberg \cite{MR2493010}
along with some extensions, see Section~\ref{sec:nummax} below.  His results are also formulated in terms of the discrete hazard rates.
These quantities are random and independent for the $\GEM$ distribution, which allows Eisenberg's results to be translated to the exchangeable $\GEM$ case.

Throughout the paper we denote by $\mathbf{1}_A$ the indicator of the set (or event) $A$. For a non-negative integer $k$ we write 
$(a)_k=a(a+1)\dots(a+k-1)$ for the rising factorial. 
 For two sequences of random variables we write $A_n\simas B_n$ as $n\to\infty$
if the limit $A_n/B_n$ exists and equals 1 a.s. The set of natural numbers is denoted $\BN=\{1,2,\dots\}$.

\section{The $\GEM$ distribution}\label{sec:GEM}

Let 
\begin{equation}\label{eq:qasprod}
Y_0=0;\qquad Y_k=1-\prod_{i=1}^{k}(1-H_i),\qquad k\in\BN,
\end{equation}
where $H_1,H_2,\dots$ is a sequence of independent random variables, $H_i$ has $\dbeta(1-\alpha,\theta+i\alpha)$ distribution, 
that is 
\begin{equation}
\label{eq:Y}
\BP[H_i\in dx]=\frac{1}{B(1-\alpha,\theta+i\alpha)} x^{-\alpha}(1-x)^{\theta+i\alpha-1}\mathbf{1}_{\{x\in[0,1]\}},\qquad i\in\BN.
\end{equation} 
Here $\alpha\in[0,1[$ and $\theta>-\alpha$ are real parameters and $B(\cdot,\cdot)$ is Euler's beta function. It is easy to see 
that $Y_k\uparrow 1$ a.s.\ as $k\to\infty$ and hence 
\begin{equation}\label{eq:pi}
\sum_{i=1}^\infty p_i=1,\qquad\text{where }p_i=Y_{i}-Y_{i-1},\quad i\in\BN.
\end{equation}
Thus $(p_i)$ is a random discrete probability distribution, that is a random element of the infinite-dimensional simplex $\{(p_i):p_i\ge 0$ 
and satisfies~\eqref{eq:pi}$\}$.  It is known \cite{MR2245368,MR2663265} as the two parameter $\GEM(\alpha,\theta)$ distribution. 

In the important special case $\alpha=0$ the discrete hazard rates $H_i$ are not only independent but also identically
distributed.  This case is often referred to as the one parameter $\GEM(\theta)$ distribution.  
This case was studied first, following which the two parametric extension proposed by S.~Engen \cite{MR515721} 
has also been extensively studied \cite{MR2245368,MR2663265}.

The $\GEM(\theta)$ distribution enjoys many nice properties, most of which admit some extension to its two parameter generalization
$\GEM(\alpha,\theta)$.  We refer to \cite{MR2245368} for an exposition of the general theory and its applications. We need just the fact that 
the components of the $\GEM(\alpha,\theta)$-distributed vector $(p_i)$ are in size-biased order
\cite[Th.~3.2]{MR2245368}. Recall that the size-biased permutation of a
fixed probability vector $(p_1,p_2,\dots)$ is its random reordering $(p_{\sigma(1)},p_{\sigma(2)},\dots)$
such that $\BP[\sigma(1)=i]=p_i$ for $i\in\BN$, and for each $k\ge1$,  $\BP[\sigma(k+1)=i|\sigma(1)=i_1,\dots,\sigma(k)=i_k]=p_i/(1-p_{i_1}-\dots-p_{i_k})$
for $i\in\BN\setminus\{i_1,\dots,i_k\}$.  The size-biased permutation can be also defined on the space of random discrete distributions
by means of conditioning, and it is easy to see that this operation is idempotent. 

Suppose that the distribution of $\mathbf{Y}=(Y_k)_{k=1}^\infty$ is defined by \eqref{eq:qasprod} and \eqref{eq:Y}
for some $\alpha\in[0,1[$ and $\theta>-\alpha$.
Given $\mathbf{Y}$ 
consider i.i.d.\ random variables $X_1,X_2,\dots$ with values in $\BN$ such that 
\begin{equation}\label{eq:defX}
\BP[X_1\le k|\mathbf{Y}]=Y_k,\qquad k\in\BN.
\end{equation}
We refer to the unconditional distribution of $(X_1,X_2,\dots)$ as the \textit{$\GEM(\alpha,\theta)$ exchangeable distribution} and to 
its finite-dimensional realization $(X_1,\dots,X_n)$ as the \textit{$\GEM(\alpha,\theta)$ exchangeable sample}. Actually,
in most applications
it is not quite natural to suppose that $X_i$ take integer values, the values are usually considered as some classes
of objects which unlike integers have no order structure.  Hence the questions usually asked about the  $\GEM$ samples \cite{MR1860188,MR2206540}
concern the number of distinct values in the sample, the number of values present exactly once
etc. 
%
%
However the invariance of the $\GEM$ distribution under size-biased permutation allows us to give an invariant description of
the $\GEM$ exchangeable sample $X_1$ and of the sample maximum \eqref{eq:defmax}.  

The size-biased permutation can
be obtained by the following construction known as Kingman's paintbox.  Consider a partition of the unit interval $[0,1]$
into intervals, either deterministic or random, and
an independent sequence of i.i.d.\ random variables $\mathbf{V}=(V_1,V_2,\dots)$ uniform on $[0,1]$. Each $V_i$ falls into some interval of the
partition a.s., and we say it discovers a new partition interval if the interval containing $V_i$ contains none of the previous values $V_1,\dots,V_{i-1}$. 
Then rearranging the
intervals in the order of their discovery by the sequence $\mathbf{V}$ gives the size-biased permutation of the partition (or of the probability vector of
its interval lengths).   We call the sequence $\mathbf{V}$ the \textit{uniform sampling sequence}.

Suppose now that the partition is random and the lengths of its intervals are some rearrangement of the $\GEM(\alpha,\theta)$ distribution. 
Take also an additional independent random variable $U_1$ uniform on $[0, 1]$, independent of the uniform sampling sequence $\mathbf{V}$.
The variable  $U_1$ discovers some interval of the partition. Consider the uniform sampling sequence $\mathbf{V}$ term by term until it discovers the same interval. 
Then $X_1$, the first sample from the $\GEM$ distribution, may be represented as the count of distinct intervals discovered by $\mathbf{V}$ until it discovers the interval containing $U_1$,
with the count including the interval containing $U_1$. Consider now the maximum $M_n$ as in \eqref{eq:defmax} of a sample from the $\GEM$ distribution. 
Similarly, a sample $U_1, \ldots U_n $ of i.i.d.\ uniform on $[0, 1]$ random variables, independent of the partition and $\mathbf{V}$,
 discovers some intervals of the partition. Then $M_n$ may be represented as the number of intervals discovered by $\mathbf{V}$ until it discovers all the intervals containing 
$U_1, \ldots, U_n$. This is the invariant description mentioned above, not relying 
on the order structure on the values assumed by the sample, which may be unnatural in some applications.

There is also another interpretation of the $\GEM(\alpha,\theta)$ maximum $M_n$. Let the interval $[0,1]$ be
partitioned by the random points $Y_1,Y_2,\dots$ defined by \eqref{eq:qasprod},~\eqref{eq:Y}. For $0 \le u \le 1$ let 
\begin{equation}\label{eq:defN}
N_{\alpha,\theta}(u):= \sum_{n=1}^\infty 1(Y_n \le u )
\end{equation}
be the point process counting the cut-points in the $\GEM(\alpha,\theta)$ interval partition.
Then
\begin{equation}\label{eq:Mnfromorder}
M_n - 1 = N_{\alpha,\theta} ( U_{n,n})
\end{equation}
where  $0 < U_{n,1} < \cdots < U_{n,n}$ are the usual order statistics of $U_1, \ldots, U_n$.


\section{Maximum of a $\GEM(\theta)$ sample}\label{sec:maxGEM(theta)}

In this section we prove Theorem~\ref{thm:Mn}. We start with an analytic proof 
and then provide a probabilistic proof which works just for the $\GEM(\theta)$ case.
The first proof is based on the connection between the maximum $M_n$ and the moments of the tail probabilities $1-Y_k$. 

\begin{lemma}\label{lem:Mn}
Let $M_n = \max_{1 \le k \le n} X_k$ for a sequence of exchangeable positive integer valued random variables $X_1,\ldots,X_n$ defined as as in \eqref{eq:defX} to be conditionally
independent and identically distributed given some random sequence $\mathbf{Y} = (Y_1, Y_2, \ldots)$ with $0 \le Y_1 \le Y_2 \le \cdots \uparrow 1$ a.s., with
$\BP[X_1\le k|\mathbf{Y}]=Y_k$ for $k = 1,2, \ldots$.  Then the probability generating function
of $M_n-1$ admits the representation
\begin{equation}\label{eq:genfunc}
\BE z^{M_n - 1} = (1-z) \sum_{j=0}^n {n \choose j } (-1)^j 
\sum_{k=1}^\infty \BE  (1 - Y_{k})^j z^{k-1}.
\end{equation}
\end{lemma}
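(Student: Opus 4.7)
The plan is to derive \eqref{eq:genfunc} by rewriting the probability generating function of $M_n-1$ in terms of the conditional cumulative distribution function of $M_n$ given $\mathbf{Y}$, expand binomially, and then exchange summations.

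First, because $X_1,\dots,X_n$ are conditionally i.i.d.\ given $\mathbf{Y}$ with $\BP[X_1\le k\mid\mathbf{Y}]=Y_k$, conditioning on $\mathbf{Y}$ and taking the expectation gives the unconditional distribution function of $M_n$,
\begin{equation*}
F_k:=\BP[M_n\le k]=\BE Y_k^n,\qquad k=0,1,2,\dots,
\end{equation*}
with $F_0=0$ (since $Y_0=0$) and $F_k\uparrow 1$ a.s.\ as $k\to\infty$. Next I would expand the $n$th power by the binomial theorem,
\begin{equation*}
Y_k^n=\bigl(1-(1-Y_k)\bigr)^n=\sum_{j=0}^n\binom{n}{j}(-1)^j(1-Y_k)^j,
\end{equation*}
and take expectations term by term (a finite sum) to obtain
\begin{equation*}
F_k=\sum_{j=0}^n\binom{n}{j}(-1)^j\BE(1-Y_k)^j.
\end{equation*}

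For the second step I would rewrite the pgf as an Abel-type telescoping sum. Writing $\BP[M_n=k]=F_k-F_{k-1}$ and splitting the series,
\begin{equation*}
\BE z^{M_n-1}=\sum_{k=1}^\infty z^{k-1}(F_k-F_{k-1})=\sum_{k=1}^\infty z^{k-1}F_k-\sum_{k=1}^\infty z^k F_k=(1-z)\sum_{k=1}^\infty z^{k-1}F_k,
\end{equation*}
where the reindexing in the second sum uses $F_0=0$. Substituting the binomial expansion of $F_k$ and interchanging the order of the (finite outer, absolutely convergent inner for $|z|<1$) summations yields \eqref{eq:genfunc}.

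The only real technical point is justifying the interchange of sums; here that is immediate because the $j$-sum is finite and $0\le\BE(1-Y_k)^j\le 1$, so $\sum_k\BE(1-Y_k)^j |z|^{k-1}$ converges for $|z|<1$, which makes Fubini applicable and the derivation rigorous. No probabilistic input beyond the conditional independence of the $X_i$ and the trivial formula $F_k=\BE Y_k^n$ is needed, and no use has yet been made of any specific distribution of $\mathbf{Y}$, so the lemma holds in the generality stated.
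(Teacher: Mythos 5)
Your proposal is correct and follows essentially the same route as the paper: the paper invokes the identity $\BE z^N=(1-z)\sum_{k\ge1}\BP[N<k]z^{k-1}$ directly where you rederive it by Abel summation, and both arguments then reduce to the binomial expansion of $Y_k^n=(1-(1-Y_k))^n$ together with $\BP[M_n\le k\mid\mathbf{Y}]=Y_k^n$. Your added care about the interchange of sums is fine but not a substantive difference.
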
 

\begin{proof} 
The probability generating function of any non-negative integer random variable $N$ can be evaluated as
\[
\BE z^N = (1 - z ) \sum_{k = 1}^\infty \BP[N < k ]z^{k-1} .
\]
Applied to $M_n - 1$  this gives \eqref{eq:genfunc} 
because for $k\in\BN$
\[
\smash[b]{\BP[M_n-1< k|\mathbf{Y}]
=\BP[M_n\le k|\mathbf{Y}]
=Y_{k}^n=(1-(1-Y_{k}))^n
=\sum_{j=0}^n{n\choose j}(-1)^j(1-Y_{k})^j\,.}
\]
\end{proof}

\begin{proof}[Proof of Theorem~\ref{thm:Mn}]
For $\GEM(\alpha,\theta)$ we have that $ 1 - H_i$ has $\dbeta (\theta + i \alpha, 1 - \alpha)$ distribution, so
\[
\BE ( 1 - H_i)^j = \frac{B(\theta+i\alpha+j,1-\alpha)}{B(\theta+i\alpha,1-\alpha)}
= \frac{ ( \theta + i \alpha )_j }{ (\theta  + (i-1) \alpha  + 1)_j }
\]
and hence
\[
\BE (1 - Y_k)^j  = \prod_{i= 1}^k \frac{ ( \theta + i \alpha )_j }{ (\theta  + (i-1) \alpha  + 1)_j } 
\]
which can be fed into the generating function \eqref{eq:genfunc}. 
Only in the case $\alpha = 0$ does there seem to be much simplification. Then
\[
\BE (1 - Y_k)^j  = \left( \frac{\theta } { \theta + j } \right)^k
\]
and the series becomes
\[
\sum_{k=1}^\infty \BE  (1 - Y_k)^j z^{k-1} = z^{-1} \sum_{k=1}^\infty  \left( \frac{\theta  z } { \theta + j } \right)^k =  \frac{ \theta }{ j + \theta(1-z)}
\]
hence
\begin{equation}\label{eq:genfM}
\BE z^{M_n - 1 } = ( 1 -z ) \sum_{k=0}^n {n \choose j } \frac{ (-1)^j \theta }{ j + \theta (1 - z ) } = \prod_{i= 1}^n \frac{ i } { i + \theta ( 1 - z) }\,.
\end{equation}
The last equality follows from the well-known partial fraction decomposition
\begin{equation}\label{eq:partfr}
\frac{1}{(x)_{n+1}}=\frac{1}{n!}\sum_{j=0}^n(-1)^j{n\choose j}\frac{1}{x+j}
\end{equation}
which can be verified, for instance, by multiplying \eqref{eq:partfr} by $x+k$ and plugging in $x=-k$ for $k=0,1,\ldots,n$.
Since the factors in the right-hand side of \eqref{eq:genfM} are the probability generating functions of $G_i$ defined by \eqref{eq:Ggeom}, 
the claim  of Theorem~\ref{thm:Mn} follows.
\end{proof}

\medskip

The representation of $M_n$ as a sum of independent geometric random variables invites a direct interpretation of the summands, and 
such an interpretation can be given using the second construction of $M_n$ in terms of the order statistics presented in the end of Section~\ref{sec:GEM}.
We write for short $N_\theta=N_{0,\theta}$ where $N_{\alpha,\theta}$ is defined by \eqref{eq:defN}.

\begin{lemma}\label{lem:Ggeom}
If\/  $0=U_{n,0}<U_{n,1}<\cdots<U_{n,n}$ are the order 
statistics for the uniform sample of size $n$ independent of $N_{\theta}$ then the random variables
\[
G_i := N_{\theta} ( U_{n,n-i+1}) -  N_{\theta} ( U_{n,n-i}),\qquad i=1,\dots,n,
\]
are mutually independent and have the geometric distribution \eqref{eq:Ggeom}.
\end{lemma}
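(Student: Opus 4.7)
My plan is to induct on $n$, peeling off the leftmost gap $G_n = N_\theta(U_{n,1})$ and using a memoryless/self-similarity property of the stick-breaking process. As a preliminary I would compute the marginal law of $G_n$: since $\BP[U_{n,1}>y]=(1-y)^n$ and $\mathbf{Y}$ is independent of the uniform sample,
\[
\BP[G_n\ge k]=\BP[Y_k\le U_{n,1}]=\BE[(1-Y_k)^n]=\prod_{i=1}^k\BE[(1-H_i)^n]=\left(\frac{\theta}{\theta+n}\right)^k,
\]
using $H_i\sim\dbeta(1,\theta)$ and $\BE[(1-H_i)^n]=\theta/(\theta+n)$. Hence $G_n$ is geometric with parameter $\tau_n=n/(\theta+n)$, which also handles the base case $n=1$.

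Next I would record the key memoryless property of $N_\theta$: for $u\in(0,1)$, conditionally on $(Y_1,\dots,Y_j)$ with $Y_j\le u$ and on $\{Y_{j+1}>u\}$, the rescaled residual process $v\mapsto N_\theta(u+(1-u)v)-j$ on $v\in[0,1]$ is distributed as $N_\theta$ and independent of the conditioning data. This follows from the memoryless property of $\dbeta(1,\theta)$: if $H\sim\dbeta(1,\theta)$, then given $\{H>x_0\}$ the rescaled residual $(H-x_0)/(1-x_0)$ is again $\dbeta(1,\theta)$. Applied to $H_{j+1}$ in the recursion $Y_{j+1}=Y_j+(1-Y_j)H_{j+1}$, with the subsequent $H_{j+2},H_{j+3},\dots$ untouched, this propagates to the whole stick-breaking process beyond $u$ and is exactly the place where the one-parameter structure is used.

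For the inductive step, I would condition on $U_{n,1}=u$ and $N_\theta(u)=j$. The remaining order statistics, rescaled via $\tilde U_{n-1,k}:=(U_{n,k+1}-u)/(1-u)$ for $k=1,\dots,n-1$, are the order statistics of $n-1$ i.i.d.\ uniforms on $[0,1]$, independent of everything on $[0,u]$. By the memoryless property, $\tilde N(v):=N_\theta(u+(1-u)v)-j$ is distributed as $N_\theta$, independent both of $\tilde U_{n-1,\cdot}$ and of $(u,j)$. Translating gives $G_i=\tilde N(\tilde U_{n-1,n-i})-\tilde N(\tilde U_{n-1,n-i-1})$ for $i=1,\dots,n-1$ (with $\tilde U_{n-1,0}:=0$), which are precisely the gap variables of the lemma at sample size $n-1$. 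By induction they are independent geometrics with parameters $\tau_1,\dots,\tau_{n-1}$; since their joint conditional law does not depend on $(u,j)$, they are also jointly independent of $G_n$. The main obstacle will be a careful formulation and justification of the memoryless/self-similarity statement together with the conditional independence of $\tilde N$ from the residual order statistics; once that is in place, the remainder is bookkeeping.
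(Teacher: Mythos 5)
Your proof is correct, but it takes a genuinely different route from the paper's. The paper's argument is a change of variables: it invokes the known fact that $N_\theta$ is an inhomogeneous Poisson process with intensity $\theta\,du/(1-u)$, maps everything to $(0,\infty)$ via $t=-\theta\log(1-u)$ so that the cut points become a standard rate-one Poisson process and the $U_i$ become i.i.d.\ exponentials with mean $\theta$, and then reads off the conclusion from R\'enyi's representation (the top-down spacings of exponential order statistics are independent exponentials with means $\theta/i$) together with the fact that a standard Poisson process run for an independent exponential time is geometric. Your induction replaces those two imported facts with a direct marginal computation, $\BP[G_n\ge k]=\BE[(1-Y_k)^n]=(\theta/(\theta+n))^k$ (which is essentially the same moment identity the paper uses in its analytic proof of Theorem~\ref{thm:Mn}), plus the regeneration property of the $\dbeta(1,\theta)$ stick-breaking beyond an independent point $u$. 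That regeneration property is exactly the exponential memorylessness of the paper's proof rewritten on $[0,1]$, so the two arguments rest on the same structural feature of the $\alpha=0$ case and both fail for $\alpha>0$ at the same place; but yours is more self-contained (no appeal to the Poisson-process description of $N_\theta$ or to R\'enyi's representation), at the cost of carrying an induction and a careful conditioning on $(U_{n,1},N_\theta(U_{n,1}))$. Your sketch of the delicate step --- that conditionally on $(Y_1,\dots,Y_j)$, $\{Y_j\le u<Y_{j+1}\}$, the variables $\tilde H_1=(H_{j+1}-x_0)/(1-x_0)$ with $x_0=(u-Y_j)/(1-Y_j)$ and $\tilde H_2=H_{j+2},\dots$ are again i.i.d.\ $\dbeta(1,\theta)$ and independent of the conditioning data --- is sound, and the final bookkeeping (the rescaled residual order statistics are those of $n-1$ fresh uniforms, and the conditional law of $(G_1,\dots,G_{n-1})$ not depending on $(u,j)$ yields independence from $G_n$) goes through as you describe.
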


\begin{proof}
The properties of $G_i$ follow from the well known fact  
\cite{MR2245368,MR2351686} 
that $(N_{\theta}(u), 0 \le u \le 1)$ is  an inhomogeneous  Poisson process with intensity $\theta du /(1-u)$ for $0 < u < 1$.
The cumulative intensity measure of $[0,u]$ is
\[
\int_0^u \frac{ \theta d v } {1-v} =  - \theta \log ( 1 - u )\,.
\]
Consequently, the $\GEM(\theta)$ cut points $Y_n$ may be constructed as $Y_n =  1 - \exp( - \gamma_n/\theta )$ where $0 < \gamma_1 < \gamma_2 < \cdots$ are
the points of a standard Poisson process 
\[
N(t):= \sum_{i=1}^\infty \mathbf{1}_{\{ \gamma_i \le t \}}
\]
on $(0,\infty)$ with rate $1$ and i.i.d.\ exponential$(1)$ spacings $\gamma_1, \gamma_2 - \gamma_1, \ldots$.
Now the $T_i:= - \theta \log ( 1 - U_i  )$ are points of an i.i.d.\ random sample from the exponential$(1/\theta)$ distribution of $\theta \gamma_1$.
Let $0=T_{n,0}<T_{n,1}M\cdots<T_{n,n}$ be the order statistics of $T_1, \ldots, T_n$ i.i.d.\ like $\theta \gamma_1$, independent of the Poisson process $N$.
The conclusion follows easily from the well known fact that the successive differences
\[
T_{n,n-i+1} - T_{n,n-i}  \deq \frac{ \theta \gamma_1  } { i }
\]
are independent exponential variables (see, e.g., \cite[Repr.~3.4]{MR1791071}),
and another well known and easily verified fact that if $N$ is a standard Poisson process independent of an exponential variable 
$\theta \gamma_1$, where $\theta >0 $ is fixed, then
$N(\theta \gamma_1)$ is 
geometric$(\tau)$ on $\{0,1,2\ldots \}$ with mean $\tau/(1-\tau) = \theta$,
 corresponding to $\tau = \theta/(\theta + 1)$.
\end{proof}

\smallskip

The representation of Theorem~\ref{thm:Mn} yields an easy

\begin{corollary}\label{cor:CLT} 
After a proper rescaling, the maximum of a sample from $\GEM(\theta)$ distribution of size $n$ has the normal limit:
\[
\frac{M_n-\theta\log n}{\sqrt{\theta\log n}}\overset{d}{\to} N,\qquad n\to\infty,
\]
where $N$ has the standard normal distribution.
\end{corollary}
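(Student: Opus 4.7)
The plan is to reduce the claim to a standard central limit theorem for a triangular (in fact, nested) array of independent summands via Theorem~\ref{thm:Mn}. By that theorem, $M_n-1\deq S_n:=G_1+\dots+G_n$ with the $G_i$ independent geometric variables of the form~\eqref{eq:Ggeom}. Since the map $y\mapsto(y-\theta\log n)/\sqrt{\theta\log n}$ is continuous and deterministic, it suffices to establish the CLT for $S_n$.

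First I would compute the mean and variance. From \eqref{eq:Ggeom} one has $\BE G_i=(1-\tau_i)/\tau_i=\theta/i$ and $\Var(G_i)=(1-\tau_i)/\tau_i^2=\theta(\theta+i)/i^2=\theta/i+\theta^2/i^2$, so
\[
\mu_n:=\BE S_n=\theta\sum_{i=1}^n\frac{1}{i}=\theta\log n+O(1),\qquad
\sigma_n^2:=\Var S_n=\theta\log n+O(1),
\]
using the standard asymptotics of the harmonic and $\zeta(2)$-partial sums. In particular $\mu_n-\theta\log n=O(1)$, so replacing $\mu_n$ by $\theta\log n$ in the centering introduces an $O(1/\sqrt{\log n})$ error that is absorbed by Slutsky.

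Next I would verify Lyapunov's condition with $\delta=1$, i.e.\ $\sigma_n^{-3}\sum_{i=1}^n\BE|G_i-\BE G_i|^3\to0$. For a geometric variable $G$ on $\{0,1,\dots\}$ with mean $\mu$, a short factorial-moment calculation gives $\BE G^3=O(\mu+\mu^3)$, hence $\BE|G-\mu|^3\le C(\mu+\mu^3)$ for an absolute constant $C$. Applied to $G_i$ with $\mu_i=\theta/i$, this yields the uniform bound $\BE|G_i-\BE G_i|^3\le C(\theta/i+\theta^3/i^3)$, so
\[
\sum_{i=1}^n\BE|G_i-\BE G_i|^3=O(\log n),\qquad
\frac{1}{\sigma_n^3}\sum_{i=1}^n\BE|G_i-\BE G_i|^3=O\bigl((\log n)^{-1/2}\bigr)\to0.
\]

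Lyapunov's CLT then gives $(S_n-\mu_n)/\sigma_n\Rightarrow N(0,1)$, and combining this with $\mu_n=\theta\log n+O(1)$ and $\sigma_n^2/(\theta\log n)\to1$ yields the conclusion of the corollary. The only mildly nontrivial step is the third-moment bound, but it is routine because the $G_i$ have explicit geometric tails (alternatively, one could appeal directly to the explicit characteristic function coming from \eqref{eq:genfM}, expanding $\log[i/(i+\theta(1-e^{it/\sqrt{\theta\log n}}))]$ to second order in $t/\sqrt{\log n}$ and summing in $i$, but Lyapunov is cleaner).
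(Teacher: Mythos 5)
Your proof is correct and takes essentially the same approach as the paper: both reduce the claim via Theorem~\ref{thm:Mn} to a central limit theorem for the independent geometric summands $G_i$ with $\BE G_i=\theta/i$ and $\Var G_i\sim\theta/i$, the paper invoking Lindeberg's theorem where you verify the (stronger, but standard) Lyapunov condition. Your supporting computations --- the variance asymptotics, the third-moment bound, and the Slutsky step replacing $\mu_n$ by $\theta\log n$ --- are all accurate.
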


\begin{proof}
Since $\BE G_i=\theta/i$ and $\Var G_i\sim \theta/i$ as $i\to\infty$, the corollary is an easy application of Lindeberg's limit theorem.
\end{proof}

Looking on the form of \eqref{eq:Mnassum} is tempting to suppose that $G_n$ is the difference $M_n-M_{n-1}$ and is independent of $M_{n-1}$. However this is
not the case, because the new sample $U_{n+1}$ gets into an arbitrary position~$k$ in the order statistics and hence changes 
a value of $G_{n-k}$.  Moreover, unlike the independent case, the successive maxima do not form a Markov chain. Heuristically, this happens because 
knowledge of the history provides some information about the realization of $\mathbf{Y}$. It can be shown, for instance, that
$\BP\bigl[M_1=j,M_2=M_3=k|M_1=j,M_2=k\bigr]$ for $j<k$ depends on $j$, but we omit this calculation.

\section{Some generalization for $\GEM(\theta)$ case}\label{sec:Ntheta}
The results of Theorem~\ref{thm:Mn} and Lemma~\ref{lem:Ggeom} can be generalized as follows.
Instead of $M_n - 1 = N_{\theta} ( U_{n,n})$, consider 
$N_{\theta} (\beta_{n,b})$ 
where as above $N_{\theta}=N_{0,\theta}$ is defined by~\eqref{eq:defN} and
$\beta_{n,b}$ is independent of the $\GEM(\theta)$ cut points $\mathbf{Y}$ and has $\dbeta(n,b)$ density at $u$ proportional to $u^{n-1} (1-u)^{b-1}$ . Then $N_{\theta} (\beta_{n,b}) $
is also distributed as a sum of independent geometric random variables.  

\begin{thm}\label{thm:Gsum2}
For $n\in\BN$ and any $\theta,b>0$ the following equality in distribution holds:
\begin{equation}\label{eq:Gsum2}
N_{\theta} (\beta_{n,b})\deq \sum_{i= 1}^n G_i (b,\theta)
\end{equation}
where the summands are mutually independent and $G_i(b,\theta)$ has geometric$(\tau_i(b,\theta))$ distribution, with
\begin{equation}\label{eq:taubtheta}
\tau_i(b,\theta) := \frac{ b + i - 1 }{ b + i - 1 + \theta } .
\end{equation}
\end{thm}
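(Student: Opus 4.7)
My plan is to adapt the analytic method from the first proof of Theorem~\ref{thm:Mn} by computing the probability generating function $\BE z^{N_{\theta}(\beta_{n,b})}$ explicitly and matching it with a product of geometric PGFs. The central observation is that, by the Poisson-process representation of $N_\theta$ recalled inside the proof of Lemma~\ref{lem:Ggeom}, conditioning on $u$ produces a Poisson law with mean $-\theta\log(1-u)$, which gives a very clean conditional generating function and lets one postpone all $\GEM$-specific work.

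Concretely, since $(N_{\theta}(u),\,0\le u\le 1)$ is inhomogeneous Poisson with cumulative intensity $-\theta\log(1-u)$, one has
\[
\BE\!\left[z^{N_{\theta}(u)}\,\big|\,u\right] \;=\; \exp\!\bigl(-\theta(1-z)\log(1-u)\bigr) \;=\; (1-u)^{\theta(1-z)}.
\]
Averaging over $\beta_{n,b}$, which is independent of $N_\theta$ and has density $u^{n-1}(1-u)^{b-1}/B(n,b)$ on $[0,1]$, produces a standard beta integral (convergent for $|z|\le 1$ since then $\Re(1-z)\ge 0$):
\[
\BE\,z^{N_{\theta}(\beta_{n,b})} \;=\; \frac{1}{B(n,b)}\int_0^1 u^{n-1}(1-u)^{b+\theta(1-z)-1}\,du \;=\; \frac{B\bigl(n,\,b+\theta(1-z)\bigr)}{B(n,b)}.
\]

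The rest is purely algebraic. Using $B(n,c)=\Gamma(n)\Gamma(c)/\Gamma(n+c)$ together with $\Gamma(c+n)/\Gamma(c)=\prod_{i=1}^n(c+i-1)$ reduces the ratio to
\[
\prod_{i=1}^n \frac{b+i-1}{b+i-1+\theta(1-z)},
\]
and a one-line check identifies each factor as the PGF of a geometric random variable with parameter $\tau_i(b,\theta)$ from~\eqref{eq:taubtheta}. Since generating functions of sums of independent variables factor, this is exactly the PGF of $\sum_{i=1}^n G_i(b,\theta)$, which forces~\eqref{eq:Gsum2} by uniqueness.

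I do not foresee any real obstacle; the computation is short and the only substantive step is recognising the beta integral. As a sanity check, setting $b=1$ gives $\beta_{n,1}\deq U_{n,n}$ and $\tau_i(1,\theta)=i/(\theta+i)=\tau_i$, recovering Theorem~\ref{thm:Mn}. For positive integer $b$ one could alternatively give a purely probabilistic proof in the spirit of Lemma~\ref{lem:Ggeom} by realising $\beta_{n,b}$ as the $n$-th order statistic of $n+b-1$ i.i.d.\ uniforms, passing to the corresponding exponential sample via $T=-\theta\log(1-U)$, and identifying the counts of an independent unit-rate Poisson process over the spacings $T_{(i)}-T_{(i-1)}\deq\theta\gamma_1/(n+b-i)$ as the independent geometrics of~\eqref{eq:taubtheta}; for arbitrary real $b>0$ the analytic route above is however the natural one.
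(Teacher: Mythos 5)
Your proposal is correct and follows essentially the same route as the paper: both condition on the value of the beta variable to get the Poisson generating function $(1-u)^{\theta(1-z)}$, average over $\beta_{n,b}$ (the paper phrases this as the moment $\BE\beta_{b,n}^{\theta(1-z)}$ while you write out the beta integral, which is the same computation), and factor the resulting ratio $(b)_n/(b+\theta(1-z))_n$ into geometric PGFs. No gaps.
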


\begin{proof}
Consider first $N_{\theta} (W)$ where $W$ is a random variable with some arbitrary distribution on $[0,1]$, independent of $N_{\theta}$.
For  $W = u$ fixed, the distribution of $N_{\theta} (u )$ is Poisson$ ( - \theta \log ( 1 - u ) )$ with the probability generating function 
\[
\BE z^{N_{\theta}(u) } =   \exp \left[ - (1 - z ) ( - \theta \log ( 1- u ) \right] = (1 - u ) ^{\theta (1-z)}\,.
\]
For general $W$ the distribution of $N_{\theta} (W )$ ranges over all mixed Poisson distributions. 
Explicitly, the probability generating function of $W$ is
\[
\BE z^{N_{\theta}(W) } =  \BE(1 - W ) ^{\theta (1-z)}.
\]
In particular, if $W = \beta_{a,b}$ has the $\dbeta(a,b)$ distribution then
\begin{align*}
\BE z^{N_{\theta}(\beta_{a,b}) } &{}=  \BE(1 - \beta_{a,b} ) ^{\theta (1-z)} \\
&{}= \BE\beta_{b,a} ^{\theta (1-z)} \\
&{}= \frac{ \Gamma( b + \theta(1-z) )  } { \Gamma( a + b + \theta(1-z) ) } \frac{ \Gamma( a + b ) }{\Gamma(b) }.
\end{align*}
Specifically, if $a = n$ is a positive integer, then 
\[
\frac{ \Gamma(n + b)}{ \Gamma(b) } = (b)_n:= \prod_{i = 1}^n (b + i - 1)
\]
so
\begin{align*}
\BE z^{ N_{\theta}(\beta_{n,b}) } &{}=  \frac{ \Gamma( b + \theta(1-z) ) } { \Gamma( n + b + \theta(1-z) } \frac{ \Gamma( n + b ) }{\Gamma(b) } \\
&{}=  \frac{ (b)_{n}  } { (b + \theta(1-z))_n } \\
&{}=  \prod_{i=1}^{n} \frac{ (b + i - 1) } { (b + i - 1 + \theta(1-z)) } \\
&{}=  \prod_{i=1}^{n} \frac{ \tau_i(b,\theta)   } { ( 1 - (1- \tau_i (b,\theta)) z ) }
\end{align*}
for $\tau_i(n,\theta)$ as in~\eqref{eq:taubtheta}. Since the $i$-th factor is the probability generating function for 
the geomet\-ric($\tau_i(n,\theta)$) distribution, the claim~\eqref{eq:Gsum2} follows.
\end{proof}

\begin{remark*}
Notice that $U_{n,n} \deq \beta_{n,1}$,  
so \eqref{eq:Gsum2} is a generalization of \eqref{eq:Mnassum}.
\end{remark*}

\section{Maximum of a $\GEM(\alpha,\theta)$ sample for $0<\alpha<1$}\label{sec:max2}

The technique of the previous two sections does not seem to work for the case $0<\alpha<1$. However the asymptotics of the $\GEM$
distribution in this case are known sufficiently well to find the asymptotic behaviour of $M_n$ as $n\to\infty$.

The key role in the study of the $\GEM(\alpha,\theta)$ distribution for the case $0<\alpha<1$ is played by the notion
of the $\alpha$-diversity of the exchangeable sample. Denote $K_n$ the number of distinct values in the $\GEM(\alpha,\theta)$ sample
of size $n$. 
It is known \cite[Th.~3.8]{MR2245368}
that if $0<\alpha<1$ and $\theta>-\alpha$ there exists a limit
\[
\lim_{n\to\infty}\frac{K_n}{n^\alpha}=D_{\alpha,\theta}>0
\]
a.s.\ and in $p$\/-th mean for every $p>0$, and the distribution of the limiting random variable $D_{\alpha,\theta}$, known as the $\alpha$-diversity, is
determined by its moments
\begin{equation}\label{eq:diver}
\BE D_{\alpha,\theta}^p=\frac{\Gamma(\theta+1)}{\Gamma(\tfrac{\theta}{\alpha}+1)}\,\frac{\Gamma(p+\tfrac{\theta}{\alpha}+1)}{\Gamma(p\alpha+\theta+1)}\,.
\end{equation}
Moreover, the $\alpha$-diversity $D_{\alpha,\theta}$ is  a.s.\ determined by $\mathbf{Y}$ and 
\[
\BP[X_1 > k|\mathbf{Y}]\simas \alpha D_{\alpha,\theta}^{1/\alpha}\, k^{1-1/\alpha},\qquad k\to\infty,
\]
see \cite[Sec.~10]{MR2318403} or \cite[Lemma~3.11]{MR2245368}.
For such a power law it is well known that the maximum of an independent sample of size $n$ converges in distribution
to the Fr\'echet distribution. Namely, writing for short $\beta=1/\alpha-1$, for any fixed $x>0$
\begin{align*}
\BP\bigl[M_n\le xn^{1/\beta}\bigm|\mathbf{Y}\bigr]&{}=\bigl(1-\BP\bigl[X_1 > xn^{1/\beta}\bigm|\mathbf{Y}\bigr]\bigr)^n\\
&{}\simas \left(1-\alpha D_{\alpha,\theta}^{1/\alpha}\frac{x^{-\beta}}{n}\right)^n\\
&{}\to \exp\bigl(-\alpha D_{\alpha,\theta}^{1/\alpha}x^{-\beta}\bigr),\qquad n\to\infty.
\end{align*}
Hence, by integration with respect to the distribution of the $\alpha$-diversity, we have the following result.

\begin{thm}\label{thm:Mnalpha}
Let $M_n$ be the maximum of a size $n$ $\GEM(\alpha,\theta)$ exchangeable sample with $0<\alpha<1$ and $\theta>-\alpha$. Then
for each $x>0$
\begin{equation}\label{eq:Mndistr}
\BP\bigl[M_n\le xn^{\alpha/(1-\alpha)}\bigr]\to \BE\exp\bigl(-\alpha D_{\alpha,\theta}^{1/\alpha}x^{-(1-\alpha)/\alpha}\bigr) \mbox{ as }  n\to\infty
\end{equation}
where $D_{\alpha,\theta}$ is the random variable with the distribution determined by its moments \eqref{eq:diver}.
\end{thm}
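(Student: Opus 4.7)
The argument is essentially the one laid out in the paragraph preceding the statement: I would condition on $\mathbf{Y}$, apply the classical Fr\'echet limit to the resulting i.i.d.\ sample with a Pareto-type tail, and then average over $\mathbf{Y}$. The plan has three short steps.

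First, by conditional independence of $X_1,\dots,X_n$ given $\mathbf{Y}$,
$$\BP\bigl[M_n\le xn^{\alpha/(1-\alpha)}\bigm|\mathbf{Y}\bigr]=\bigl(1-\BP\bigl[X_1>xn^{\alpha/(1-\alpha)}\bigm|\mathbf{Y}\bigr]\bigr)^n.$$
Setting $\beta=(1-\alpha)/\alpha$ and invoking the a.s.\ tail asymptotic $\BP[X_1>k\mid\mathbf{Y}]\simas \alpha D_{\alpha,\theta}^{1/\alpha}k^{-\beta}$ recalled from \cite[Lemma~3.11]{MR2245368} at $k=xn^{1/\beta}$ gives
$$\BP\bigl[X_1>xn^{1/\beta}\bigm|\mathbf{Y}\bigr]\simas \frac{\alpha D_{\alpha,\theta}^{1/\alpha}\,x^{-\beta}}{n}.$$

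Second, the elementary identity $(1-c_n/n)^n\to\re^{-c}$ whenever $c_n\to c$ (checked via $n\log(1-c_n/n)=-c_n+O(c_n^2/n)$) then turns this into the a.s.\ pointwise limit
$$\BP\bigl[M_n\le xn^{1/\beta}\bigm|\mathbf{Y}\bigr]\longrightarrow\exp\bigl(-\alpha D_{\alpha,\theta}^{1/\alpha}x^{-\beta}\bigr).$$
Third, since these conditional probabilities lie in $[0,1]$, the bounded convergence theorem permits me to take expectations through the limit and obtain exactly the right-hand side of~\eqref{eq:Mndistr}, with the law of $D_{\alpha,\theta}$ characterised by the moments \eqref{eq:diver}.

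The step that does the real work is the first one: it rests on the nontrivial a.s.\ Pareto tail of $\BP[X_1>k\mid\mathbf{Y}]$, a fact specific to the $\GEM(\alpha,\theta)$ structure and ultimately equivalent to the existence of the $\alpha$-diversity. Once that input is in hand, everything that follows is the standard passage from an a.s.\ Pareto tail to a Fr\'echet limit, plus a one-line application of bounded convergence; in particular, no uniformity in $x$ is needed because \eqref{eq:Mndistr} is only asserted pointwise.
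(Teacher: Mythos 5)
Your proposal is correct and follows essentially the same route as the paper: condition on $\mathbf{Y}$, use the a.s.\ Pareto tail $\BP[X_1>k\mid\mathbf{Y}]\simas\alpha D_{\alpha,\theta}^{1/\alpha}k^{1-1/\alpha}$ from \cite[Lemma~3.11]{MR2245368}, pass to the Fr\'echet limit $\exp\bigl(-\alpha D_{\alpha,\theta}^{1/\alpha}x^{-(1-\alpha)/\alpha}\bigr)$ a.s., and integrate out $\mathbf{Y}$ by bounded convergence. Your explicit mention of bounded convergence makes precise the paper's phrase ``by integration with respect to the distribution of the $\alpha$-diversity.''
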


\begin{remark*}
For the case $\alpha=0$ the asymptotics $K_n\simas \theta\log n$ is well known \cite[Sec.~3.3]{MR2245368}, and comparing this with 
Corollary~\ref{cor:CLT} we see that asymptotically $K_n$ and $M_n$ have the same behaviour. For $\alpha>0$ the situation
is different: $K_n$ should be divided by $n^{\alpha}$ to get a proper limit, and $M_n$ grows much faster as a random factor of $n^{\alpha/(1-\alpha)}$.
\end{remark*}

Note that \eqref{eq:Mndistr} expresses the cumulative distribution function of $\lim n^{-\alpha/(1-\alpha)}M_n$ 
evaluated at $x$ as the 
Laplace transform $\BE\bigl[\re^{-y D_{\alpha,\theta}^{1/\alpha}}\bigr]$ evaluated at $y=\alpha x^{-(1-\alpha)/\alpha}$.
Since the moments of $D_{\alpha,\theta}$ given by \eqref{eq:diver} determine its distribution, we can obtain an explicit but clumsy
expression for the limiting distribution function \eqref{eq:Mndistr}. 

\begin{thm}\label{prop:Mnalphaasint}
For the distribution of $D_{\alpha,\theta}$ determined by the moment function \eqref{eq:diver},
\begin{equation}\label{eq:Mndistrasint}
\BE\exp\bigl(-\alpha D_{\alpha,\theta}^{1/\alpha}x^{-(1-\alpha)/\alpha}\bigr)
=\frac{2\alpha^{1-\theta-\alpha}\Gamma(\theta+1)}{\Gamma(\tfrac{\theta}{\alpha}+1)}
x^{(1-\alpha)(\theta/\alpha+1)}\int_0^\infty v^{\theta+2\alpha-1}\re^{-(v^2/\alpha)^\alpha x^{1-\alpha}} J_\theta(2v)\,dv,
\end{equation}
where $J_\theta$ is the Bessel function.
\end{thm}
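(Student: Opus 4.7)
The plan is to expand both sides as power series in $x^{-(1-\alpha)/\alpha}$ and match term by term, using the moment formula \eqref{eq:diver} together with the fact (asserted just before the theorem) that the distribution of $D_{\alpha,\theta}$ is determined by its moments, so the Laplace transform of $D_{\alpha,\theta}^{1/\alpha}$ agrees with its formal power series for the relevant range of arguments. Set $S:=D_{\alpha,\theta}^{1/\alpha}$ and $y:=x^{-(1-\alpha)/\alpha}$. From \eqref{eq:diver} with $p$ replaced by $k/\alpha$ we read off
\[
\BE S^k = \frac{\Gamma(\theta+1)}{\Gamma(\tfrac{\theta}{\alpha}+1)}\,\frac{\Gamma(\tfrac{k}{\alpha}+\tfrac{\theta}{\alpha}+1)}{\Gamma(k+\theta+1)},
\]
and hence
\[
\BE \re^{-\alpha y S}= \frac{\Gamma(\theta+1)}{\Gamma(\tfrac{\theta}{\alpha}+1)}\sum_{k=0}^{\infty}\frac{(-\alpha y)^k}{k!}\,\frac{\Gamma(\tfrac{\theta+k}{\alpha}+1)}{\Gamma(\theta+k+1)}.
\]

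Next I would attack the integral on the right of \eqref{eq:Mndistrasint} by inserting the defining series of the Bessel function, $J_\theta(2v)=\sum_{m\ge 0}\frac{(-1)^m v^{2m+\theta}}{m!\,\Gamma(m+\theta+1)}$, and integrating termwise. Writing $c:=x^{1-\alpha}/\alpha^\alpha$ so that $(v^2/\alpha)^\alpha x^{1-\alpha}=cv^{2\alpha}$, each term reduces, via the substitution $u=cv^{2\alpha}$, to a Gamma integral:
\[
\int_0^\infty v^{2\theta+2\alpha-1+2m}\,\re^{-cv^{2\alpha}}\,dv=\frac{1}{2\alpha}\,c^{-(\theta+\alpha+m)/\alpha}\,\Gamma\!\bigl(\tfrac{\theta}{\alpha}+1+\tfrac{m}{\alpha}\bigr).
\]
Collecting the sum, the integral equals
\[
\frac{c^{-\theta/\alpha-1}}{2\alpha}\sum_{m=0}^{\infty}\frac{(-c^{-1/\alpha})^m}{m!}\,\frac{\Gamma(\tfrac{\theta}{\alpha}+1+\tfrac{m}{\alpha})}{\Gamma(m+\theta+1)}.
\]

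Now the key observation is that $-c^{-1/\alpha}=-\alpha y$, so the power series produced by the Bessel expansion is exactly the one appearing in $\BE\re^{-\alpha y S}$, up to the prefactor $\Gamma(\theta+1)/\Gamma(\tfrac{\theta}{\alpha}+1)$. Solving for $\BE\re^{-\alpha y S}$ and unwinding $c^{\theta/\alpha+1}=\alpha^{-\theta-\alpha}x^{(1-\alpha)(\theta/\alpha+1)}$ gives precisely the right hand side of \eqref{eq:Mndistrasint}.

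The main issue to settle carefully is the legitimacy of the termwise integration and of identifying the Laplace transform with its formal power series. Absolute convergence of the Bessel series is uniform on compacts, and the Gaussian-type factor $\re^{-cv^{2\alpha}}$ provides ample decay at infinity, so Fubini (or dominated convergence after truncating $v$) disposes of the termwise integration. The moment estimate $\Gamma(\tfrac{\theta+k}{\alpha}+1)/\Gamma(\theta+k+1)$ grows like $k!^{\,1/\alpha-1}$ times exponential factors, so the series defining $\BE \re^{-\alpha y S}$ converges only when $\alpha=1$; for $0<\alpha<1$ this is the only delicate point, but since $D_{\alpha,\theta}$ is moment-determined (as cited in the text), the Laplace transform is uniquely characterized by its asymptotic series on the positive real axis, and the two analytic expressions must agree.
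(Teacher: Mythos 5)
Your formal computation is correct and is, at the level of coefficients, the shadow of the paper's argument: the paper writes $\re^{-yS}$ as a Mellin--Barnes integral, pushes the expectation through using \eqref{eq:diver} for complex $p$, and recognizes $\Gamma(s)/\Gamma(\theta-s+1)$ and $\Gamma(\tfrac{\theta-s}{\alpha}+1)$ as the Mellin transforms of a Bessel function and of a stretched exponential, so the answer appears as a multiplicative convolution; all steps there are covered by absolute convergence on the vertical contour. Your route replaces the contour integral by a power-series expansion, and this is where the argument genuinely breaks for small $\alpha$. The ratio $\Gamma(\tfrac{\theta+k}{\alpha}+1)/\bigl(k!\,\Gamma(\theta+k+1)\bigr)$ grows like $(k!)^{1/\alpha-2}$ up to exponential factors, so both series you write down --- the moment series for $\BE\,\re^{-\alpha yS}$ and the series obtained by termwise integration of the Bessel expansion --- \emph{diverge} for every $y\neq0$ once $\alpha<\tfrac12$ (not only at $\alpha\ne1$, as you state; for $\alpha>\tfrac12$ both are entire and your argument does go through). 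In particular your appeal to Fubini for the termwise integration fails precisely when it is needed: Fubini requires the absolutely-summed double integral to be finite, and that absolute sum is the divergent series; the convergence of $\int_0^\infty v^{\theta+2\alpha-1}\re^{-cv^{2\alpha}}J_\theta(2v)\,dv$ itself relies on the oscillatory cancellation in $J_\theta$, which is destroyed by expanding the Bessel series termwise.

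The second, more serious gap is the closing step. Having matched two divergent formal series, you conclude equality of the two functions because ``the Laplace transform is uniquely characterized by its asymptotic series on the positive real axis.'' This is false: $\re^{-1/y}$ has identically zero asymptotic expansion as $y\downarrow0$, so two distinct functions can share an asymptotic series. Moment-determinacy of $D_{\alpha,\theta}$ says that a \emph{probability distribution} with those moments is unique; to invoke it you would have to exhibit the right-hand side of \eqref{eq:Mndistrasint} as the Laplace transform (at $\alpha y$) of a positive random variable whose moments are $\BE S^k$, which you have not done. To repair the proof along your lines you would need either a Borel-type summability argument identifying both sides from their common divergent expansion within a suitable Gevrey class, or --- much more simply --- the paper's device of working with the Mellin transform on a vertical line, where every interchange is justified by absolute convergence.
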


\begin{proof}
Writing for short $y=\alpha x^{-(1-\alpha)/\alpha}$ we have, for any $c>0$, 
\[
\re^{-yD_{\alpha,\theta}^{1/\alpha}}=\frac{1}{2\pi\ii}\int_{c-\ii\infty}^{c+\ii\infty}\Gamma(s)\bigl(yD_{\alpha,\theta}^{1/\alpha}\bigr)^{-s}ds
\]
because $\re^{-y}$ and $\Gamma(s)$ form the Mellin pair. We refer to \cite{MR0352890}
for the necessary information about Mellin's transform. By analyticity the expression~\eqref{eq:diver} for moments of $D_{\alpha,\theta}$ is valid
also for complex $p$ at least with $\Re p>-1-\tfrac{\theta}{\alpha}$.  
Hence taking expectation and applying Fubini's theorem yields
\begin{equation}\label{eq:LaplaceD}
\BE\bigl[\re^{-yD_{\alpha,\theta}^{1/\alpha}}\bigr]=\frac{1}{2\pi\ii}\frac{\Gamma(\theta+1)}{\Gamma(\tfrac{\theta}{\alpha}+1)}
\int_{c-\ii\infty}^{c+\ii\infty}\Gamma(s)\,\frac{\Gamma(\tfrac{\theta-s}{\alpha}+1)}{\Gamma(\theta-s+1)}y^{-s}ds
\end{equation}
for $0<c<\alpha+\theta$.
Now, $\Gamma(s)/\Gamma(\theta-s+1)$ is the Mellin transform of $y^{-\theta/2}J_\theta(2\sqrt{y})$ in the fundamental strip $0<\Re s<\tfrac{\theta}{2}+\tfrac{3}{4}$ (\cite[II.5.38]{MR0352890}, where there is a misprint in the right bound) and $\Gamma(\tfrac{\theta-s}{\alpha}+1)$ is the 
Mellin transform of $\alpha y^{-\alpha-\theta}\re^{-y^{-\alpha}}$ for $\Re s<\alpha+\theta$, by the standard transformations
of the Mellin pair $\re^{-y}$ and $\Gamma(s)$. Hence 
 their product in the intersection of fundamental strips is the Mellin transform of the multiplicative convolution and for $0<c<\min\{\tfrac{\theta}{2}+\tfrac{3}{4},\alpha+\theta\}$
by the inversion formula 
\[
\frac{1}{2\pi\ii}
\int_{c-\ii\infty}^{c+\ii\infty}\Gamma(s)\,\frac{\Gamma(\tfrac{\theta-s}{\alpha}+1)}{\Gamma(\theta-s+1)}y^{-s}ds
=\alpha\int_{0}^\infty  (y/u)^{-\alpha-\theta}\re^{-(y/u)^{-\alpha}}u^{-\theta/2}J_\theta(2\sqrt{u})\,\frac{du}{u}\,.
\]
Plugging this into \eqref{eq:LaplaceD}, changing the variable $v=\sqrt{u}$ and returning to the variable $x$ yields the result.
\end{proof}

The right-hand side of~\eqref{eq:Mndistrasint} does not seem to allow much simplification for general $\alpha$. For some rational
$\alpha$ Mathematica evaluates this integral in terms of the hypergeometric function. However for $\alpha=1/2$ the integral can be taken explicitly
and leads to a simple expression.  In this case the integral is the Mellin transform of 
$f(v)=\re^{-v\sqrt{2 x}} J_\theta(2v)$ evaluated at $\theta+1$. According to \cite[I.10.7]{MR0352890}
\[
\int_0^\infty v^{s-1}\re^{-v\sqrt{2 x}} J_\theta(2v)\,dv=(2x)^{-(s+\theta)/2}\frac{\Gamma(\theta+s)}{\Gamma(\theta+1)}\,
{}_2F_1\bigl(\tfrac{\theta+s}{2},\tfrac{\theta+s+1}{2};\theta+1;-\tfrac{2}{x}\bigr)\,
\]
and the last expression simplifies for $s=\theta+1$ because
\[
{}_2F_1(a,b;b;z)=\sum_{k=0}^{\infty}\frac{(a)_k}{k!}z^k=\frac{1}{(1-z)^a}
\]
for $|z|<1$ and by analyticity also for all $z$ with $\Re z<1$. Hence
\[
\int_0^\infty v^{\theta}\re^{-v\sqrt{2 x}} J_\theta(2v)\,dv=\frac{\Gamma(2\theta+1)}{\Gamma(\theta+1)}\,
\frac{1}{(2x+4)^{\theta+1/2}}
\]
and plugging it into \eqref{eq:Mndistrasint} gives the following result.

\begin{corollary}
Let $M_n$ be the maximum of a size $n$ $\GEM(\tfrac{1}{2},\theta)$ exchangeable sample with  $\theta>-\tfrac12$. Then
$M_n/n$ converges in distribution as $n\to\infty$ to a random variable with the cumulative distribution function $\bigl(x/(x+2)\bigr)^{\theta+1/2}$.
\end{corollary}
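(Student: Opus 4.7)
The plan is short because the substantive computation has already been laid out in the text preceding the statement; what remains is to assemble the pieces and verify the final arithmetic. Since $\alpha/(1-\alpha) = 1$ and $(1-\alpha)/\alpha = 1$ when $\alpha = \tfrac{1}{2}$, Theorem~\ref{thm:Mnalpha} immediately reduces the claim to the identity
\[
\BE \exp\bigl(-\tfrac{1}{2}\, D_{1/2,\theta}^2 / x\bigr) = \bigl(x/(x+2)\bigr)^{\theta+1/2}, \qquad x>0,
\]
since with these values $n^{\alpha/(1-\alpha)} = n$ and the exponent in \eqref{eq:Mndistr} becomes $-\tfrac{1}{2} D_{1/2,\theta}^2/x$.

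First I would specialize the integral representation \eqref{eq:Mndistrasint} of Theorem~\ref{prop:Mnalphaasint} to $\alpha = \tfrac12$. The prefactor $2\alpha^{1-\theta-\alpha}$ reduces to $2^{\theta+1/2}$, the outer power $x^{(1-\alpha)(\theta/\alpha+1)}$ reduces to $x^{\theta+1/2}$, and the integrand collapses to $v^{\theta}\re^{-v\sqrt{2x}} J_\theta(2v)$. Next I invoke the closed-form evaluation $\int_0^\infty v^\theta \re^{-v\sqrt{2x}} J_\theta(2v)\,dv = \Gamma(2\theta+1)\big/\bigl(\Gamma(\theta+1)(2x+4)^{\theta+1/2}\bigr)$ derived in the preceding paragraph; the key algebraic input there is the degeneracy ${}_2F_1(a,b;b;z) = (1-z)^{-a}$, which collapses the hypergeometric factor from the Mellin table~\cite[I.10.7]{MR0352890} into an elementary power of $1+2/x$.

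Multiplying out the prefactor and the integral evaluation, the factors $\Gamma(\theta+1)/\Gamma(2\theta+1)$ cancel, the powers of $x$ combine with $(2x+4)^{-(\theta+1/2)}$, and the powers of $2$ balance, leaving $\bigl(2x/(2x+4)\bigr)^{\theta+1/2} = \bigl(x/(x+2)\bigr)^{\theta+1/2}$. Since this expression is continuous and strictly increasing on $(0,\infty)$ with limits $0$ and $1$, it is a legitimate cumulative distribution function, and convergence in distribution of $M_n/n$ follows.

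There is no real obstacle; the substance sits in the Bessel integral and the Mellin-inversion mechanism of Theorem~\ref{prop:Mnalphaasint}. The one subtlety worth flagging in the writeup is that the degeneracy identity for ${}_2F_1$ is a priori valid only for $|z|<1$, whereas $z = -2/x$ must be accommodated for all $x>0$. Since both sides of ${}_2F_1(a,b;b;z) = (1-z)^{-a}$ are holomorphic on $\BC\setminus[1,\infty)$, analytic continuation extends the identity to the full range, exactly as already noted in the discussion preceding the corollary.
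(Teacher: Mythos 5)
Your proposal is correct and follows exactly the paper's route: the corollary is obtained by specializing Theorem~\ref{prop:Mnalphaasint} to $\alpha=\tfrac12$, evaluating the Bessel integral via the Mellin table entry \cite[I.10.7]{MR0352890}, and collapsing the hypergeometric factor with ${}_2F_1(a,b;b;z)=(1-z)^{-a}$ extended by analytic continuation. The arithmetic you carry out (cancellation of the Gamma factors and combination of the powers of $2$ and $x$ into $\bigl(x/(x+2)\bigr)^{\theta+1/2}$) matches the paper's computation.
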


\section{Number of maximal values}\label{sec:nummax}

As it was mentioned in the introduction, the behaviour of the number of maxima in a sample is related to 
the discrete hazard rates. Denote the number of values in the sample equal to the maximum by
\[
L_n=\sum_{j=1}^n 1_{\{X_j=M_n\}}\,.
\]
For distributions with infinite support the only possible limit for $L_n$ is the degenerate distribution in $1$ because $L_n=1$ infinitely often 
(at least each time when the new maximum is reached).  So a natural questions is how big can $L_n$ be for large $n$.
One of the main results of \cite{MR2493010} is the following:

\begin{lemma}[\cite{MR2493010}]\label{lem:limsup} 
Let $X_1,X_2,\dots$ be a sequence of i.i.d.\ random variables with values in $\BN$ and infinitely supported distribution.
Then, for any $\ell\in\BN$, $\BP[\limsup\nolimits_n L_n=\ell]=1$ if and only if\/ $\sum_{j=1}^\infty h_j^\ell=\infty$ and $\sum_{j=1}^\infty h_j^{\ell+1}<\infty$,
where $h_j$ is defined by \eqref{eq:hazard}. If the above series diverge for all $\ell\in\BN$ then $\BP[\limsup\nolimits_n L_n=\infty]=1$.
\end{lemma}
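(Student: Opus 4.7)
The plan is to reduce Lemma~\ref{lem:limsup} to an application of both Borel--Cantelli lemmas via a family of auxiliary random variables that turn out to be mutually independent. For each $j\in\BN$, let $\hat L_j$ denote the length of the initial run of $j$'s in the sub-sequence $(X_i:X_i\ge j)$; equivalently, $\hat L_j$ is the number of copies of $j$ appearing in $X_1,X_2,\ldots$ before any term exceeding $j$ is observed, so $\hat L_j\ge 1$ precisely when $j$ is a record value. Since $X_1$ has infinite support we have $M_n\uparrow\infty$ a.s., and during the period for which $M_n$ equals a given record value $j$, the statistic $L_n$ climbs through the values $1,2,\ldots,\hat L_j$ before being reset at the next record. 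Therefore
\[
\limsup_{n\to\infty} L_n \;=\; \limsup_{j\to\infty} \hat L_j \qquad\text{a.s.}
\]

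The crucial structural fact is that $\hat L_1,\hat L_2,\ldots$ are \emph{mutually independent} with $\BP[\hat L_j\ge\ell]=h_j^\ell$. To see this, fix $j$ and let $Y^{(j)}_1,Y^{(j)}_2,\ldots$ be the sub-sequence of those $X_i$ with $X_i\ge j$; it is a.s.\ infinite and i.i.d.\ with common law $X_1$ conditioned on $X_1\ge j$. Mark each $Y^{(j)}_m$ with $Z_m:=\mathbf{1}_{\{Y^{(j)}_m=j\}}$; then $(Z_m)$ is i.i.d.\ Bernoulli$(h_j)$ and is independent of the retained values $(Y^{(j)}_m)_{m:\,Z_m=0}$, which themselves form an i.i.d.\ sequence with law $X_1$ conditioned on $X_1>j$ and coincides with $Y^{(j+1)}$. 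Because $\hat L_j$ is a function of $(Z_m)$ alone---the initial run-length of ones---it is independent of $Y^{(j+1)}$ and hence of the entire tail $(\hat L_k)_{k>j}$. Iterating level by level gives mutual independence of the whole family, and $\BP[\hat L_j\ge\ell]=\BP[Z_1=\cdots=Z_\ell=1]=h_j^\ell$ is immediate.

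With independence in hand, both Borel--Cantelli lemmas finish the proof. If $\sum_j h_j^{\ell+1}<\infty$, the first lemma yields $\hat L_j\le\ell$ for all sufficiently large $j$ a.s., so $\limsup_n L_n\le\ell$. If $\sum_j h_j^\ell=\infty$, the second lemma---now applicable thanks to the independence established above---yields $\hat L_j\ge\ell$ for infinitely many $j$ a.s., hence $\limsup_n L_n\ge\ell$. Combining the two inequalities characterises $\{\limsup_n L_n=\ell\}$ a.s., and when all the series $\sum_j h_j^\ell$ diverge the lower bound holds for every $\ell$ and gives $\limsup_n L_n=\infty$ a.s.

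The main obstacle is the mutual independence claim in the second step: at first glance $\hat L_j$ and $\hat L_{j+1}$ look entangled through their common dependence on $(X_i)$. The resolution is the elementary but somewhat subtle observation that thinning an i.i.d.\ sequence by an independent Bernoulli marker leaves the marker pattern and the retained sub-sequence independent; applying this recursively at each level $j$ decouples the $\hat L_j$'s entirely, after which the Borel--Cantelli step is routine.
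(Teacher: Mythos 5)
The paper does not prove Lemma~\ref{lem:limsup} at all: it is imported verbatim from Eisenberg \cite{MR2493010}, so there is no in-paper argument to compare against. Your self-contained proof is correct, and it follows the natural (and, as far as I can tell, essentially Eisenberg's) route: reduce $\limsup_n L_n$ to the run-lengths $\hat L_j$ at successive record levels, establish $\BP[\hat L_j\ge\ell]=h_j^\ell$ together with mutual independence via Bernoulli thinning of the i.i.d.\ subsequence $(X_i:X_i\ge j)$, and finish with the two Borel--Cantelli lemmas. The two points a referee would scrutinize both hold up: the identity $\limsup_n L_n=\limsup_j\hat L_j$ is justified because $M_n\uparrow\infty$ a.s.\ and $L_n$ sweeps through $1,\dots,\hat L_j$ while $M_n=j$; and the ``iterate level by level'' step does yield \emph{mutual} independence, since knowing that $\hat L_j$ is independent of the vector $(\hat L_{j+1},\dots,\hat L_m)$ for every $j$ lets the joint law factor completely by induction. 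The stated ``if and only if'' also follows from what you wrote, because $\sum_j h_j=\infty$ always (as $\prod_j(1-h_j)=0$), so exactly one of the mutually exclusive alternatives in the lemma occurs with probability one.
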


This result has an immediate consequence for samples from the two parameter $\GEM$ distribution.

\begin{thm}
Let $X_1,X_2,\dots$ have the $\GEM(\alpha,\theta)$ exchangeable distribution. 
Then 
\begin{alignat*}{2}&\BP\bigl[\limsup\nolimits_n L_n=1\bigr]=1,\qquad &&\alpha>0;\\
&\BP\bigl[\limsup\nolimits_n L_n=\infty\bigr]=1,\qquad&&\alpha=0. 
\end{alignat*}
\end{thm}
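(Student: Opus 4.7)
The plan is to apply the Eisenberg Lemma~\ref{lem:limsup} conditionally on the random sequence $\mathbf{Y}$. Given $\mathbf{Y}$, the variables $X_1,X_2,\dots$ are i.i.d.\ with $\BP[X_1\le k|\mathbf{Y}]=Y_k$, so their conditional discrete hazard rates are
\[
h_j=\frac{\BP[X_1=j\mid\mathbf{Y}]}{\BP[X_1\ge j\mid\mathbf{Y}]}=\frac{Y_j-Y_{j-1}}{1-Y_{j-1}}=H_j,
\]
which by \eqref{eq:qasprod}--\eqref{eq:Y} are independent with $H_j\sim\dbeta(1-\alpha,\theta+j\alpha)$. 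Since $0<H_j<1$ a.s., the conditional support of $X_1$ is all of $\BN$ a.s., so the hypotheses of the lemma are met. The problem reduces to deciding the a.s.\ convergence of the random series $\sum_j H_j^\ell$ for $\ell=1,2,\dots$.

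For the moments I would use the standard formula
\[
\BE H_j^\ell=\frac{(1-\alpha)_\ell}{(1-\alpha+\theta+j\alpha)_\ell}\sim \frac{(1-\alpha)_\ell}{(\alpha j)^\ell}\qquad (j\to\infty,\ \alpha>0).
\]
In the case $\alpha>0$ this immediately gives $\sum_j\BE H_j^2<\infty$, whence $\sum_j H_j^2<\infty$ a.s. At the same time $\sum_j\BE H_j=\infty$ while $\Var H_j\le \BE H_j^2=O(j^{-2})$, so $\sum_j\Var H_j<\infty$ and Kolmogorov's one-series theorem forces $\sum_j(H_j-\BE H_j)$ to converge a.s.; therefore $\sum_j H_j=\infty$ a.s. Applying Lemma~\ref{lem:limsup} with $\ell=1$ conditionally on $\mathbf{Y}$ yields $\limsup_n L_n=1$ a.s.

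In the case $\alpha=0$ the $H_j$ are i.i.d.\ $\dbeta(1,\theta)$ with $\BE H_j^\ell=\ell!/(1+\theta)_\ell$, a strictly positive constant independent of $j$. The strong law of large numbers then forces $\sum_j H_j^\ell=\infty$ a.s.\ simultaneously for every $\ell\in\BN$, and the second clause of Lemma~\ref{lem:limsup} gives $\limsup_n L_n=\infty$ a.s.

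The only mildly delicate point is the a.s.\ divergence of $\sum_j H_j$ when $\alpha>0$, since the summands have mean tending to zero; the $O(j^{-2})$ variance bound disposes of this via Kolmogorov's theorem. The rest is bookkeeping: computing beta moments, checking the infinite-support hypothesis, and transferring the conditional statement to an unconditional one by integrating over $\mathbf{Y}$.
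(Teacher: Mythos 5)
Your proof is correct and follows essentially the same route as the paper: condition on $\mathbf{Y}$, identify the conditional hazard rates with the independent beta variables $H_j$, compute their moments, and feed the resulting a.s.\ convergence/divergence of $\sum_j H_j^\ell$ into Eisenberg's Lemma~\ref{lem:limsup}. If anything you are slightly more careful than the paper, which invokes Kolmogorov's three-series theorem for $\sum_j H_j^2<\infty$ but does not explicitly verify $\sum_j H_j=\infty$ for $\alpha>0$ (your one-series argument handles this; it also follows automatically from $Y_k\uparrow1$ a.s., i.e.\ from the sample distribution being proper with infinite support).
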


\begin{proof}
If the distribution of $H_i$ is defined by \eqref{eq:Y} then 
\[
\BE[H_i^k]=\frac{B(1-\alpha+k,\theta+i\alpha)}{B(1-\alpha,\theta+i\alpha)}
=\frac{\Gamma(1-\alpha+k)\Gamma(1+(i-1)\alpha+\theta)}{\Gamma(1-\alpha)\Gamma(1+(i-1)\alpha+\theta+k)}\,.
\]
Hence for $\alpha>0$ 
\[
\BE[H_i^k]\sim\frac{\Gamma(1-\alpha+k)}{\Gamma(1-\alpha)}(i\alpha)^{-k},\qquad i\to\infty,
\]
and since $H_i\in[0,1]$ by Kolmogorov's three series theorem the series $\sum H_i^2$ converges a.s. So $\BP[\limsup\nolimits_n L_n=1|(H_i)]=1$ 
by Lemma~\ref{lem:limsup}, and 
also unconditionally.  On the other hand $\BE[H_i^k]$ does not depend on $i$ for $\alpha=0$, so the series $H_i^k$ diverges by the
same theorem and again Lemma~\ref{lem:limsup} implies $\BP[\limsup\nolimits_n L_n=\infty|(H_i)]=1$ and hence unconditionally.
\end{proof}

\bibliographystyle{plain}
\bibliography{gemmax}

\end{document}